\theoremstyle{definition}
\newtheorem{mydef}{Definition}
\theoremstyle{plain}
\newtheorem{thm}{Theorem}[section]
\newtheorem*{thm*}{Theorem}
\newtheorem{corollary}{Corollary}[section]
\newtheorem{lemma}{Lemma}[section]
\title{Bounding extremal functions of forbidden $0-1$ matrices using $(r,s)$-formations}
\date{}
\author{Jesse Geneson\\
\small\tt geneson@gmail.com\\
Meghal Gupta\\
\small\tt meghal.gupta@gmail.com
}
\begin{document}
\maketitle

\begin{abstract}
First, we prove tight bounds of $n 2^{\frac{1}{(t-2)!}\alpha(n)^{t-2} \pm O(\alpha(n)^{t-3})}$ on the extremal function of the forbidden pair of ordered sequences $(1 2 3 \ldots k)^t$ and $(k \ldots 3 2 1)^t$ using bounds on a class of sequences called $(r,s)$-formations. Then, we show how an analogous method can be used to derive similar bounds on the extremal functions of forbidden pairs of $0-1$ matrices consisting of horizontal concatenations of identical identity matrices and their horizontal reflections. 
\end{abstract}

\section{Introduction}
A generalized Davenport-Schinzel sequence that avoids the sequence $u$ is a sequence that has no subsequence isomorphic to $u$. Generalized Davenport-Schinzel sequences have a wide range of applications in mathematics. They have been used to bound the complexity of the lower envelope of a set of polynomial curves in the plane \cite{lowerenv}, as well as the maximum number of edges in $k$-quasiplanar graphs on $n$ vertices with no pair of edges intersecting in more than $O(1)$ points \cite{quasi, kquasi}. 

Related extremal problems have also been studied for 0-1 matrices. Extremal problems for 0-1 matrices correspond to extremal problems about ordered bipartite graphs. The maximum number of ones in a 0-1 matrix avoiding a 0-1 matrix pattern $P$ is the same as the maximum number of edges in an ordered bipartite graph with vertex sets $\{1,\ldots,n\}$ and $\{n+1,\ldots,2n\}$ avoiding a certain ordered bipartite graph pattern $P'$ \cite{graphs}. Extremal functions of 0-1 matrices have also been used to find the shortest rectilinear path in a grid with obstacles \cite{rectilinear}, which has applications to robot navigation. F\"{u}redi also used the extremal function to determine an upper bound on the maximum number of unit distances in a convex $n$-gon \cite{ngon}.

\subsection{Definitions and Results}
\begin{mydef} For a sequence $S$ of letters, let $|S|$ denote the length of $S$, and $||S||$ denote the number of distinct letters in $S$. The sequence $S$ $contains$ another sequence $u$ if some subsequence of $S$ is isomorphic to $u$. $S$ $avoids$ $u$ if it does not $contain$ $u$. The unordered extremal function, which we denote $ex_u(v,n)$, is the maximum length of a $||v||-sparse$ sequence with $n$ distinct letters that $avoids$ $v$. For a family $F$ of sequences with the same number of distinct letters, we define $ex_u(F,n)$ to be the maximum length of such a sequence that $avoids$ all sequences in $F$.
\end{mydef}

\begin{mydef}An ordered sequence is a set of symbols with an ordering; for convenience, we will use numbers. A sequence $S$ $order$-$contains$ another sequence $u$ if some subsequence of $S$ is isomorphic to $u$, by means of an isomorphism that preserves order. For convenience, we simply use $contain$ and $avoid$ if it is clear we are referring to ordered sequences. The ordered extremal function $ex_o(P,n)$ is the maximum length of a $||u||-sparse$ sequence with $n$ distinct letters that $order$-$avoids$ $u$. For a $family$ $F$, $ex_o(F,n)$ is the maximum length of such a sequence that $order$-$avoids$ all sequences in $F$.
\end{mydef}

\begin{mydef}
An $(r,s)$-formation is a concatenation of $s$ permutations on the same $r$ symbols. Let $\zeta_{r,s}(n)$ be $ex_u(F_{r,s},n)$, where $F_{r,s}$ is the set of all $(r,s)$-formations.
\end{mydef}

Agarwal, Sharir, Shor, and Hart proved most of the lower bounds for $\zeta_{r,s}(n)$ \cite{lowerbounds, lowerbounds2}, but we only use the upper bounds in this paper. Pettie sharpened the bounds and extended them to doubled $(r,s)$-formations \cite{pettie}. 

\begin{mydef}
A $j$-fat permutation on $r$ symbols is a sequence with $r$ distinct letters and $j$ occurrences of each letter. A $j$-tuple $(r,s)$-formation is a concatenation of $s$ $j$-fat permutations on the same $r$ symbols. Let $\Phi_{r,s}(n)$ be $ex_u(FF_{r,s},n)$, where $FF_{r,s}$ is the set of all $r$-tuple $(r,s)$-formations.
\end{mydef}

Nivasch \cite{rsbounds} and Pettie \cite{pettie} proved the upper bounds on $\zeta_{r,s}(n)$ and $\Phi_{r,s}(n)$ for $r \geq 3$.

\begin{thm} \label{zetabounds} \cite{rsbounds, pettie} 

$\zeta_{r,s}(n), \Phi_{r,s}(n) = \begin{cases} \Theta(n) &\mbox{if } s \leq 3 \\ \Theta(n \alpha(n)) &\mbox{if } s = 4 \\ \Theta(n 2^{\alpha(n)}) &\mbox{if } s = 5 \\ \Theta(n 2^{\alpha(n)^{t}(\log \alpha(n) \pm O(1))/t!}) &\mbox{if } s \geq 6, \text{$s$ even} \\ n \cdot 2^{(1/t!)\alpha (n)^t \pm O(\alpha (n)^{t-1})} & \mbox{if } s\geq 7, \text{$s$ odd}. \end{cases}$ \\

where $t=\left \lfloor \frac{s-3}{2} \right \rfloor$ and $r \geq 3$. (The O notation hides factors dependent on r and s.) 
\end{thm}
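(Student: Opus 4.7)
The plan is to establish these bounds by induction on $s$, via a recursive block-decomposition argument of the sort used by Nivasch and refined by Pettie. First I would handle the base cases. For $s \leq 3$ a trivial counting argument shows any sequence avoiding every $(r,3)$-formation has length $O(n)$. For $s = 4$ and $s = 5$, I would reduce to the classical Davenport--Schinzel bounds on $DS_3$ and $DS_4$ sequences (since any $(r,s)$-formation contains an $abab\cdots$ of appropriate length), yielding $\Theta(n\alpha(n))$ and $\Theta(n 2^{\alpha(n)})$ respectively.

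For the inductive step increasing $s$ by $2$, I would take a sequence $S$ on $n$ distinct letters that avoids every $(r,s+2)$-formation and partition it into contiguous blocks $B_1,\ldots,B_m$, each containing a carefully chosen number of distinct letters. Call a letter \emph{local} if it appears in only one block and \emph{global} otherwise. The local letters inside each block $B_i$ contribute at most $\zeta_{r,s+2}(n_i)$, where $n_i$ is the number of local letters of $B_i$, and the subsequence formed by global letters must avoid every $(r,s)$-formation, because the two extra permutations needed to build an $(r,s+2)$-formation can be supplied by occurrences straddling adjacent blocks. Balancing block sizes against the count of global letters yields a recurrence of the shape
\[
\zeta_{r,s+2}(n) \;\leq\; O(n) + k\cdot\zeta_{r,s+2}(n/k) + \zeta_{r,s}(n),
\]
and iterating this recurrence $\alpha(n)$ times while plugging in the inductive bound on $\zeta_{r,s}$ produces the claimed Ackermann-type growth.

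For the $r$-tuple extension $\Phi_{r,s}(n)$, I would run the same block argument together with a pigeonhole step showing that once sufficiently many blocks contain a permutation on a fixed set of $r$ symbols, one can extract $s$ consecutive $r$-fat permutations from sufficient local density. This is the point at which Pettie's sharpening of Nivasch's argument is essential, as the naive decomposition loses constants that must be recovered to keep $\Phi_{r,s}$ at the same asymptotic level as $\zeta_{r,s}$.

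The main obstacle is obtaining the precise leading constant $1/t!$ and the sub-leading error $\pm O(\alpha(n)^{t-1})$. This requires tracking contributions from every level of the Ackermann hierarchy simultaneously, choosing the block parameter optimally relative to the current level's growth rate, and carrying the additive errors through all $t$ layers of recursion without accumulating extra factors. The mild asymmetry between odd and even $s$, namely the $\log \alpha(n)$ factor appearing only in the even case, comes from the parity of the final unwinding step and requires a slightly different endgame analysis in each case.
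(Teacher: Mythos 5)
This theorem is not proved in the paper at all: it is imported wholesale from Nivasch and Pettie, and the entire content of the result lives in those references. Your sketch reproduces the general shape of Nivasch's recursive block decomposition, but it has concrete gaps that would prevent it from standing in for the cited work. First, the theorem asserts two-sided bounds ($\Theta$ for $s \leq 6$ even, and $\pm$ error terms for odd $s \geq 7$), yet your argument only addresses upper bounds; the matching lower bounds require the explicit superposition/path-compression constructions of Hart--Sharir and Agarwal--Sharir--Shor and Nivasch's refinements of them, which you never mention. Second, your base-case reduction is backwards: the fact that every $(r,s)$-formation contains a long alternation $abab\cdots$ shows that a sequence avoiding that alternation avoids all $(r,s)$-formations, which yields $\zeta_{r,s}(n) \geq ex_u(abab\cdots,n)$ --- a \emph{lower} bound. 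It does not give the upper bounds $\zeta_{r,4}(n) = O(n\alpha(n))$ and $\zeta_{r,5}(n) = O(n2^{\alpha(n)})$, since a sequence can avoid every $(r,4)$-formation while still containing $ababa$ (e.g.\ $ababa$ itself, for $r \geq 3$). Those upper bounds need the formation-specific recursive argument even in the base cases.

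Third, the key step of your inductive recurrence is false as stated: the subsequence of global occurrences need not avoid every $(r,s)$-formation merely because two extra permutations ``can be supplied by occurrences straddling adjacent blocks.'' A global letter occurring in two blocks does not supply a permutation of $r$ symbols; Nivasch's actual argument classifies each global letter's occurrences into first, middle, and last blocks and applies the inductive hypothesis only to the middle portions, with a separate accounting for first and last occurrences, and the recurrence is indexed by both $s$ and the level of the Ackermann hierarchy rather than iterated ``$\alpha(n)$ times.'' Finally, the parts you defer --- extracting the leading constant $1/t!$, the $\log\alpha(n)$ factor in the even case, the $O(\alpha(n)^{t-1})$ error control, and keeping $\Phi_{r,s}$ at the same level as $\zeta_{r,s}$ --- are precisely the technical contributions of \cite{rsbounds} and \cite{pettie}, and they matter here: the constant $\frac{1}{(t-2)!}$ in Theorem \ref{2seq} is inherited directly from them. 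As written, the proposal is an outline of where a proof would live, not a proof.
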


For unordered sequences, the $(r,s)$-formation upper bounds have been used to provide a generalized bound on the extremal functions of all sequences: For a sequence $u$, $ex_u(u,n) \leq \zeta_{r,s-r+1}$, where $r=||u||$ and $s=|u|$ \cite{rsbounds}.

In \cite{gpt}, a function of sequences called formation width ($fw$) was defined so that $fw(u)$ is the minimum $s$ for which there exists $r$ such that every $(r, s)$-formation contain $u$. The paper showed that $fw((a b c \ldots)^t) = 2t-1$ and $ex_{u}((a b c \ldots)^t,n) = n 2^{\frac{1}{(t-2)!}\alpha(n)^{t-2} \pm O(\alpha(n)^{t-3})}$. This result implied an improved upper bound on the maximum number of edges in $k$-quasiplanar graphs with no pair of edges intersecting in more than $O(1)$ points. 

In this paper, we apply $(r,s)$-formations to bound extremal functions of forbidden pairs of ordered sequences. The bounds on $\zeta_{r,s}(n)$ apply to ordered $(r,s)$-formations as well. Using this, in Theorem \ref{2seq} of our paper, we prove the following: Let $\sigma_1=(1 2 3 \ldots k)^t$ and $\sigma_2=(k \ldots 3 2 1)^t$. Then $ex_{o}(\{\sigma_1, \sigma_2\},n) = n 2^{\frac{1}{(t-2)!}\alpha(n)^{t-2} \pm O(\alpha(n)^{t-3})}$.

\begin{mydef}A 0-1 matrix $A$ $contains$ another $P$ if some submatrix of $A$ can be transformed into $P$ by possibly changing some ones to zeroes. $A$ $avoids$ $P$ if it does not $contain$ $P$. Denote $ex(P,n)$ as the maximum number of ones an $n \times n$ 0-1 matrix that $avoids$ $P$ can have. For a $family$ $F$, let $ex(F,n)$ be the maximum number of ones in such a matrix that $avoids$ every element of $F$.
\end{mydef}

We use an analogue of $(r,s)$-formations in $0-1$ matrices to prove a similar result for forbidden pairs of $0-1$ matrices.

\begin{mydef} For an ordered sequence $S$ define $\chi(S)$ to be the corresponding 0-1 matrix where the only 1-entry in column $c$ is in the row number in spot $c$ of $S$. For example,
\[\chi(12323)=\left( \begin{array}{ccccc}
1 & 0 & 0 & 0 & 0\\
0 & 1 & 0 & 1 & 0\\
0 & 0 & 1 & 0 & 1\end{array} \right)\]
\end{mydef}

This is an intuitive correspondence because if sequence $a$ $contains$ sequence $b$, then $\chi(a)$ will $contain$ $\chi(b)$.

\begin{mydef}
Define a permutation matrix $(r,s)$-formation as a concatenation of $s$ $r \times r$ permutation matrices, or $\chi(a)$, where $a$ is an ordered sequence $(r,s)$-formation. For a given $r,s$ let $G_{r,s}$ be all permutation matrix $(r,s)$-formations. Denote $ex(G_{r,s},n)$ as $\lambda_{r,s}(n)$.
\end{mydef}

Cibulka, Kyncl, and Pettie \cite{vc, pettie} proved bounds that imply upper bounds on the extremal function of the family of permutation matrix $(r,s)$-formations.

\begin{mydef}
Let a $B$-$fat$ $r \times r$ permutation matrix be a matrix with $r$ rows in which every row has $B$ ones. Let an $(r,s)$-$B$-fat be a matrix of $s$ horizontally concatenated $r \times r$ $B$-$fat$ permutation matrices. Let $H_{r,s}$ be the set of all $(r,s)$-$r$-fats for a given $r,s$, and let $\Gamma_{r,s}(n) = ex(H_{r,s},n)$.
\end{mydef}

Cibulka and Kyncl used bounds on $\Gamma_{r,s}(n)$ to find bounds on the maximum size of permutations with VC-dimension $k$ for a given $k$ \cite{vc}. The following bounds on $\lambda_{r,s}(n)$ follow trivially from the bounds on $\Gamma_{r,s}(n)$ in \cite{vc, pettie}, where $t=\left \lfloor \frac{s-3}{2} \right \rfloor$:

\[\lambda_{r,s}(n), \Gamma_{r,s}(n) \leq \begin{cases} O(n) &\mbox{if } s \leq 3 \\ O(n\alpha(n)^{2}) & \mbox{if } s = 4 \\ n \cdot 2^{(1/t!)\alpha(n)^t+O(\alpha(n)^{t-1})} & \mbox{if } \text{$s$ odd, $s \geq 5$} \\ n \cdot 2^{(1/t!)\alpha (n)^t \log _2 \alpha (n) + O(\alpha (n)^t)} & \mbox{if } \text{$s$ even, $s\geq 6$}. \end{cases}\]

In Theorem \ref{abcmatrix}, we bound $ex(\{A,B\},n)$ where $A$ is a concatenation of identical identity matrices, and $B$ is the horizontal reflection of $A$. The bounds are especially interesting in the case that $A$ and $B$ are both the concatenation of exactly two matrices, since in this case $ex(\{A,B\},n) = \Theta(n)$.

\section{Sequence Results}

We first note the following trivial lemma.

\begin{lemma}
Let $F_{r,s}$ be the family of ordered $(r,s)$-formations for a given $r$ and $s$. Then, $ex_o(F_{r,s},n)=ex_u(F_{r,s},n)=\zeta_{r,s}(n)$. Consequently, the bounds in Theorem \ref{zetabounds} still apply.
\end{lemma}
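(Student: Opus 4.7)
The plan is to observe that when we consider the \emph{entire} family $F_{r,s}$ (all $(r,s)$-formations, under every possible labeling of the $r$ symbols), avoiding the family in the ordered sense and avoiding it in the unordered sense are logically equivalent conditions on a sequence $S$.

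First I would show the easy direction: if $S$ order-contains some ordered $(r,s)$-formation $u \in F_{r,s}$, then by definition some subsequence of $S$ is order-isomorphic (hence isomorphic) to $u$, so $S$ unordered-contains $u$ as well. Therefore any sequence that unordered-avoids $F_{r,s}$ also order-avoids $F_{r,s}$, giving $ex_o(F_{r,s},n) \geq ex_u(F_{r,s},n)$.

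For the reverse direction I would argue as follows. Suppose $S$ unordered-contains some $(r,s)$-formation $u$, witnessed by a subsequence $T$ of $S$ and a bijection between the letters of $T$ and the letters of $u$. Let $a_1 < a_2 < \cdots < a_r$ be the distinct letters appearing in $T$, listed in the order given by $S$. Relabel the symbols of $u$ by sending the letter corresponding to $a_i$ to $i$; the resulting sequence $u'$ is still an $(r,s)$-formation (relabeling permutes the permutations, but each block is still a permutation on the same $r$ symbols), so $u' \in F_{r,s}$. By construction $T$ is order-isomorphic to $u'$, so $S$ order-contains $u' \in F_{r,s}$. Hence order-avoidance and unordered-avoidance coincide, yielding $ex_o(F_{r,s},n) \leq ex_u(F_{r,s},n)$.

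Combining the two inequalities gives $ex_o(F_{r,s},n) = ex_u(F_{r,s},n) = \zeta_{r,s}(n)$, and the bounds from Theorem \ref{zetabounds} apply verbatim. There is no real obstacle here; the only thing to be careful about is recording that the family $F_{r,s}$ is closed under relabeling, which is what lets us promote an arbitrary unordered containment into an order-preserving one.
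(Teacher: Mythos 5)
Your proof is correct and matches the paper's intent: the paper states this lemma without proof, calling it trivial, and the reason it is trivial is exactly the observation you make explicit, namely that $F_{r,s}$ is closed under relabeling of its $r$ symbols, so any unordered containment of a formation can be promoted to an ordered containment of a (relabeled) formation in the same family. Nothing is missing; you have simply written out the argument the paper leaves implicit.
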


Any sequence or family of sequences that is guaranteed to be contained in every $(r,s)$-formation for a given $r$ and $s$ has extremal function at most $ex(F_{r,s},n)$. This is the guiding principle behind our subsequent proofs.

\begin{lemma}\label{fwop}
If $\sigma_1=(1 2 3 \ldots k)^t$ and $\sigma_2=(k \ldots 3 2 1)^t$, then $fw(\left\{ \sigma_1, \sigma_2 \right\}) = 2t-1$ for $k \geq 2$.
\end{lemma}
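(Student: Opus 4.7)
The plan is to establish the upper bound $fw(\{\sigma_1,\sigma_2\}) \leq 2t-1$ and the matching lower bound $fw(\{\sigma_1,\sigma_2\}) \geq 2t-1$ by reducing each direction to the known single-sequence formation-width result $fw((1\,2\,\ldots k)^t)=2t-1$ from \cite{gpt}, together with a Ramsey-type pigeonhole.

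For the upper bound, I would set $K=(k-1)^2+1$ and invoke the result of \cite{gpt} applied to the pattern $(1\,2\,\ldots K)^t$ to fix a value $r$ large enough that every $(r,2t-1)$-formation contains $(1\,2\,\ldots K)^t$ as an (unordered) subsequence. Now take any ordered $(r,2t-1)$-formation on the ordered symbol set $\{1,\ldots,r\}$. The guaranteed occurrence of $(1\,2\,\ldots K)^t$ supplies $K$ distinct symbols $b_1,b_2,\ldots,b_K \in \{1,\ldots,r\}$ and $t$ of the constituent permutations in each of which the pattern $b_1 b_2 \ldots b_K$ appears in this order as a subsequence. Applying the Erd\H{o}s--Szekeres theorem to the sequence $b_1,\ldots,b_K$ viewed under the natural order on $\{1,\ldots,r\}$ gives a monotone subsequence $b_{i_1},\ldots,b_{i_k}$ of length $k$ with indices $i_1<\cdots<i_k$; restricting the $t$ copies of the $K$-pattern to these $k$ positions yields $\sigma_1$ in the increasing case and $\sigma_2$ in the decreasing case.

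For the lower bound, I would invoke the lower-bound direction of $fw((1\,2\,\ldots k)^t)=2t-1$: for every $r$, there is an unordered $(r,2t-2)$-formation $F$ avoiding $(1\,2\,\ldots k)^t$. Relabeling the symbols of $F$ by $\{1,\ldots,r\}$ under any fixed total order turns $F$ into an ordered $(r,2t-2)$-formation. Since order-containment of either $\sigma_1$ or $\sigma_2$ immediately implies unordered containment of $(1\,2\,\ldots k)^t$ (both $\sigma_1$ and $\sigma_2$ are isomorphic to that pattern once order is forgotten), $F$ must order-avoid both $\sigma_1$ and $\sigma_2$. As this holds for every $r$, no $s<2t-1$ can witness the formation-width condition for the family $\{\sigma_1,\sigma_2\}$.

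The only nontrivial ingredient is the pigeonhole/Ramsey step in the upper bound: we cannot apply the unordered formation theorem directly at parameter $k$, because a single unordered occurrence of $(1\,2\,\ldots k)^t$ may have its $k$ underlying symbols in any relative order on $\{1,\ldots,r\}$ and therefore need not yield either of the two specified ordered patterns. Inflating the parameter from $k$ to $K=(k-1)^2+1$ and routing the resulting occurrence through Erd\H{o}s--Szekeres is exactly what is needed to force one of the two desired ordered configurations; no further case analysis is required.
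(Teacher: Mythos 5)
Your proof is correct, and your lower bound is exactly the paper's: both deduce it from the lower-bound half of $fw((ab\ldots)^t)=2t-1$ in \cite{gpt}, observing that an unordered $(r,2t-2)$-formation avoiding $(1\ldots k)^t$ automatically order-avoids both $\sigma_1$ and $\sigma_2$. Your upper bound, however, takes a genuinely different (though related) route. The paper works directly on the formation: it extracts a \emph{binary} $(k,2t-1)$-formation (one in which every permutation is $1\ldots k$ or $k\ldots 1$) from any $(\gamma,2t-1)$-formation with $\gamma$ sufficiently large, and then pigeonholes on the $2t-1$ orientations to find $t$ permutations of the same type, which immediately yield $\sigma_1$ or $\sigma_2$. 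You instead invoke the full single-pattern theorem $fw((1\ldots K)^t)=2t-1$ as a black box at the inflated alphabet size $K=(k-1)^2+1$, and then apply Erd\H{o}s--Szekeres once to the $K$ extracted symbols to force a monotone $k$-term subsequence. Both arguments are valid; the paper's version avoids the quadratic inflation of the alphabet and isolates the binary-formation lemma, whose matrix analogue is reused in Section 3, while yours has the virtue of treating the cited theorem purely as a black box with a single clean Erd\H{o}s--Szekeres step. One small imprecision in your write-up: an unordered occurrence of $(1\ldots K)^t$ in a formation need not have its $t$ blocks aligned with the constituent permutations, so the phrase ``$t$ of the constituent permutations in each of which the pattern $b_1\ldots b_K$ appears'' is not literally what containment gives you; but your argument never uses that alignment --- it only needs $(b_1 b_2\ldots b_K)^t$ as a subsequence of the whole formation --- so the step goes through unchanged.
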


\begin{proof}
The lower bound follows since $fw((a b \ldots)^{t}) = 2t-1$. The upper bound follows from a result in \cite{gpt}. Let a binary $(r,s)$-formation be an $(r,s)$-formation in which every permutation is $1 \ldots r$ or $r \ldots 1$. For $\gamma$ sufficiently large, every $(\gamma,s)$-formation contains a binary $(r,s)$-formation. Thus $fw(\left\{ \sigma_1, \sigma_2 \right\}) \leq 2t-1$ by the pigeonhole principle.
\end{proof}

If $F$ denotes the family of ordered sequences that are isomorphic to $(1 \ldots k)^{t}$, then $ex_{o}(F, n) = ex_{u}((1 \ldots k)^{t}, n)$. However, $ex_{o}((1 \ldots k)^{t}, n) > ex_{u}((1 \ldots k)^{t}, n)$. 

In the next theorem, we show that only $2$ of the $k!$ permutations of $1, \ldots, k$ are necessary for the ordered sequence extremal function to be within a constant factor of the unordered sequence extremal function. Specifically, $ex_{o}(\left\{ (1 \ldots k)^{t}, (k \ldots 1)^{t} \right\}, n) = \Theta(ex_{u}((1 \ldots k)^{t}, n))$.

\begin{thm} \label{2seq}
If $\sigma_1=(1 \ldots k)^t$ and $\sigma_2=(k \ldots 1)^t$, then $ex_o(\{\sigma_1, \sigma_2\},n) = n 2^{\frac{1}{(t-2)!}\alpha(n)^{t-2} \pm O(\alpha(n)^{t-3})}$ for $k \geq 2$ and $t \geq 3$.
\end{thm}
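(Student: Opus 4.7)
The plan is to derive both bounds directly from the formation-width machinery already set up in the excerpt. The upper bound follows from Lemma \ref{fwop} combined with Theorem \ref{zetabounds}, and the lower bound is inherited from the unordered problem studied in \cite{gpt}.

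For the lower bound, I would observe that any $k$-sparse sequence that unordered-avoids $(a b \ldots)^t$ also order-avoids both $\sigma_1$ and $\sigma_2$, since an order-preserving subsequence isomorphism is in particular an unordered isomorphism. Hence
$$ex_o(\{\sigma_1,\sigma_2\},n) \;\geq\; ex_u((a b \ldots)^t,n) \;=\; n \cdot 2^{\frac{1}{(t-2)!}\alpha(n)^{t-2} \pm O(\alpha(n)^{t-3})},$$
using the value computed in \cite{gpt} and quoted in the introduction.

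For the upper bound, I would apply Lemma \ref{fwop}: the statement $fw(\{\sigma_1,\sigma_2\}) = 2t-1$ means there exists $r$ (depending only on $k$ and $t$) such that every ordered $(r,2t-1)$-formation order-contains $\sigma_1$ or $\sigma_2$. Any $k$-sparse sequence that order-avoids both $\sigma_i$ must then order-avoid every ordered $(r,2t-1)$-formation, and by the trivial lemma opening Section 2 its length is at most $\zeta_{r,2t-1}(n)$. Setting $s = 2t-1$ in Theorem \ref{zetabounds} gives $\lfloor(s-3)/2\rfloor = t-2$ with $s$ odd; the theorem then yields $n \cdot 2^{(1/(t-2)!)\alpha(n)^{t-2} \pm O(\alpha(n)^{t-3})}$ for $t \geq 4$ (where $s \geq 7$), and for $t = 3$ the case $s = 5$ gives $\Theta(n\cdot 2^{\alpha(n)})$, which matches the claimed form since $(t-2)! = 1$ and $\alpha(n)^{t-2} = \alpha(n)$.

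There is essentially no remaining obstacle beyond verifying that the containment in Lemma \ref{fwop} is genuinely order-preserving. It is: the pigeonhole argument in the proof of that lemma picks out $t$ identical binary permutations (all $1 \ldots r$ or all $r \ldots 1$) inside a binary $(r,2t-1)$-formation extracted as an ordered subsequence of the ambient $(\gamma,2t-1)$-formation, so the resulting copy of $(1 \ldots r)^t$ or $(r \ldots 1)^t$, and hence the copy of $\sigma_1$ or $\sigma_2$ obtained by restricting to the first $k$ symbols, is automatically order-preserving. All the deep work is offloaded to the cited formation bounds.
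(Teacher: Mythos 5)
Your proposal is correct and follows the paper's own route exactly: the upper bound via $fw(\{\sigma_1,\sigma_2\})=2t-1$ (Lemma \ref{fwop}) plugged into the odd-$s$ case of Theorem \ref{zetabounds} with $s=2t-1$, and the lower bound inherited from $ex_u((ab\ldots)^t,n)$ because order-containment of $\sigma_1$ or $\sigma_2$ implies unordered containment of $(ab\ldots)^t$. You merely spell out the details (including the $t=3$, $s=5$ case) that the paper leaves implicit.
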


\begin{proof}
The upper bound follows from Lemma \ref{fwop}. The lower bound follows from the lower bound on $ex_{u}((a b c \ldots)^{t},n)$.
\end{proof}

Note that the value of $r$ is not important, since in the bounds for $\zeta_{r,s}(n)$, $r$ only affects the constant factor; only the value of $s$ is significant here.

In order to derive a more general corollary, we introduce two more definitions.

\begin{mydef}
If $u$ is a sequence, then $dfw(u)$ is the minimum $s$ for which there exists $r$ such that every $r$-tuple $(r,s)$-formation contains $u$. 
\end{mydef}

Below we define reduced sequences to simplify the calculation of $dfw(u)$.

\begin{mydef}
If $u$ is a sequence or family of sequences, then $red(u)$ is the result of replacing every block of adjacent same letters in $u$ with one occurrence of the letter. 
\end{mydef}

The next lemma follows trivially from Lemma 1.2 in \cite{pettie}.

\begin{lemma}\label{ftod}
$dfw(u) = fw(red(u))$ for all sequences $u$
\end{lemma}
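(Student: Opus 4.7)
The plan is to prove the two inequalities $fw(red(u)) \leq dfw(u)$ and $dfw(u) \leq fw(red(u))$ separately.

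For the easy direction $fw(red(u)) \leq dfw(u)$, let $s = dfw(u)$ with witness $r$, so that every $r$-tuple $(r,s)$-formation contains $u$. Given any $(r,s)$-formation $F'$, I would form its \emph{inflation} $F$ by replacing each symbol occurrence with $r$ consecutive copies of that symbol. Each permutation block of $F'$ then becomes an $r$-fat permutation on the same $r$ letters, so $F$ is an $r$-tuple $(r,s)$-formation and, by hypothesis, contains $u$ at positions $p_1 < \cdots < p_{|u|}$. Each $p_i$ lies inside a unique inflated block, which corresponds to a unique position $j_i$ of $F'$ satisfying $F'_{j_i} = u_i$. Whenever $u_i \ne u_{i+1}$ the corresponding $j$-values are distinct (hence strictly increasing), so picking one representative per maximal run of $u$ yields a strictly increasing sequence of positions in $F'$ whose letters spell out $red(u)$. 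Thus every $(r,s)$-formation contains $red(u)$, giving $fw(red(u)) \leq s$.

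For the harder direction $dfw(u) \leq fw(red(u))$, let $s = fw(red(u))$ with witness $r_0$, and take $r$ large compared to $r_0$ and the maximum run length of $u$. Given an $r$-tuple $(r,s)$-formation $F$, the idea is to extract an $(r_0,s)$-formation as a subsequence of $F$ that contains $red(u)$ by hypothesis, and then to expand each embedded letter to a run of the required length by using the many surplus copies remaining in the surrounding $r$-fat blocks of $F$. As the paper notes, this translation is exactly the content of Lemma 1.2 in \cite{pettie}, which I would invoke directly.

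The main obstacle — encapsulated by the appeal to Pettie's lemma — is the expansion step: a naive extraction (e.g., the first-occurrence-of-each-letter permutation in each block) can embed two consecutive letters of $red(u)$ inside the same fat block with no unused copies of the earlier letter between them, frustrating the expansion. Pettie's argument circumvents this via a more careful extraction — essentially an inflate/deflate correspondence dual to the one used above — ensuring that sufficient local redundancy is always available.
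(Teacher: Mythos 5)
Your proposal is correct and takes essentially the same route as the paper, which disposes of the lemma entirely by citing Lemma 1.2 of \cite{pettie}; the substantive direction $dfw(u) \le fw(red(u))$ is handled identically in both, by direct appeal to that lemma. Your inflation argument for the easier inequality $fw(red(u)) \le dfw(u)$ is sound and simply supplies detail that the paper leaves implicit.
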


Observe that the last lemma can be used to generalize the bounds in Theorem \ref{2seq}.

\begin{lemma}
If $\sigma_1=(1 \ldots k)^t$ and $\sigma_2=(k \ldots 1)^t$, then let $\gamma_{i}$ for $i = 1, 2$ be obtained from $\sigma_{i}$ by replacing every letter with $j$ adjacent copies of itself. Then $dfw(\{\gamma_1, \gamma_2\}) = 2t-1$.
\end{lemma}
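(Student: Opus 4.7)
The plan is to reduce the claim to Lemma \ref{fwop} via the reduction in Lemma \ref{ftod}. The first step is to verify that $red(\gamma_1)=\sigma_1$ and $red(\gamma_2)=\sigma_2$. Because $k\geq 2$, neither $\sigma_1$ nor $\sigma_2$ has two adjacent equal letters: within a single block $1\,2\,\ldots\,k$ (or its reverse) the letters are distinct, and across a block boundary the last symbol ($k$ or $1$) differs from the first symbol of the next block ($1$ or $k$). Since $\gamma_i$ is obtained from $\sigma_i$ by replacing each letter with $j$ adjacent copies, the only runs of equal adjacent letters in $\gamma_i$ are those $j$-long runs, and collapsing each such run returns $\sigma_i$.

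Once this is in hand, I would invoke the natural family version of Lemma \ref{ftod}, namely $dfw(F) = fw(\{red(u):u\in F\})$ for any finite family $F$. Applying it with $F=\{\gamma_1,\gamma_2\}$ gives
\[
  dfw(\{\gamma_1,\gamma_2\}) = fw(\{red(\gamma_1),red(\gamma_2)\}) = fw(\{\sigma_1,\sigma_2\}),
\]
and by Lemma \ref{fwop} the right-hand side equals $2t-1$, which is the desired conclusion.

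The only step requiring a bit of care is that Lemma \ref{ftod} is stated in the excerpt for a single sequence rather than a family. The underlying correspondence inherited from Lemma 1.2 of \cite{pettie}, however, is purely local: containment of $\gamma_i$ in an $r$-tuple $(r,s)$-formation is equivalent to containment of $red(\gamma_i)=\sigma_i$ in the $(r,s)$-formation obtained by collapsing each within-block run of equal adjacent letters to a single occurrence. Applying this equivalence separately for $i=1$ and $i=2$ immediately yields the family-level statement used above. I expect this bookkeeping to be the only nontrivial point, and it contains no new idea beyond what is already used for the single-sequence lemma.
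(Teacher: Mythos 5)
Your proposal is correct and matches the paper's intended argument: the paper states this lemma without proof immediately after remarking that Lemma \ref{ftod} generalizes Theorem \ref{2seq}, so the route via $red(\gamma_i)=\sigma_i$, the (family version of) $dfw(u)=fw(red(u))$, and Lemma \ref{fwop} is exactly what is intended. Your care about the single-sequence versus family formulation is reasonable but unproblematic, since the paper's definition of $red$ already covers families and the reduction applies to each member separately, as you note.
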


\begin{corollary}
If $\sigma_1=(1 \ldots k)^t$ and $\sigma_2=(k \ldots 1)^t$, then let $\gamma_{i}$ for $i = 1, 2$ be obtained from $\sigma_{i}$ by replacing every letter with $j$ adjacent copies of itself. Then $ex_o(\{\gamma_1, \gamma_2\},n) = n 2^{\frac{1}{(t-2)!}\alpha(n)^{t-2} \pm O(\alpha(n)^{t-3})}$ for $k \geq 2$ and $t \geq 3$.
\end{corollary}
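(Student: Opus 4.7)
The plan is to mirror the structure of the proof of Theorem \ref{2seq}, using the preceding Lemma (which gives $dfw(\{\gamma_1,\gamma_2\}) = 2t-1$) to obtain the upper bound from the bounds on $\Phi_{r,s}(n)$, and deducing the lower bound from Theorem \ref{2seq} itself.

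For the upper bound, I would argue as follows. Because $dfw(\{\gamma_1,\gamma_2\}) = 2t-1$, there exists an $r$ such that every $r$-tuple $(r,2t-1)$-formation contains $\gamma_1$ or $\gamma_2$. Exactly as in the trivial lemma at the start of Section~2, ordered and unordered $r$-tuple formations coincide up to relabeling, so the same statement holds for the ordered version: every ordered $r$-tuple $(r,2t-1)$-formation order-contains $\gamma_1$ or $\gamma_2$. Hence any $k$-sparse ordered sequence on $n$ distinct letters that order-avoids both $\gamma_1$ and $\gamma_2$ must also avoid every ordered $r$-tuple $(r,2t-1)$-formation, giving $ex_o(\{\gamma_1,\gamma_2\},n) \leq \Phi_{r,2t-1}(n)$. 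Plugging $s = 2t-1$ into Theorem \ref{zetabounds}, so that $\lfloor(s-3)/2\rfloor = t-2$, yields exactly $n\,2^{\frac{1}{(t-2)!}\alpha(n)^{t-2} \pm O(\alpha(n)^{t-3})}$ in both the $t=3$ case ($s=5$, bound $\Theta(n2^{\alpha(n)})$) and the $t\geq 4$ case (the odd $s\geq 7$ branch).

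For the lower bound, observe that $\sigma_i$ is a subsequence of $\gamma_i$: simply select one of the $j$ adjacent copies of each letter. Consequently, any ordered sequence $S$ that order-contains $\gamma_i$ also order-contains $\sigma_i$, so the class of sequences order-avoiding $\{\sigma_1,\sigma_2\}$ is contained in the class of sequences order-avoiding $\{\gamma_1,\gamma_2\}$. Therefore $ex_o(\{\gamma_1,\gamma_2\},n) \geq ex_o(\{\sigma_1,\sigma_2\},n)$, and Theorem \ref{2seq} supplies the lower bound $n\,2^{\frac{1}{(t-2)!}\alpha(n)^{t-2} \pm O(\alpha(n)^{t-3})}$, matching the upper bound.

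The only place requiring real care is the ordered-vs-unordered bookkeeping in the upper bound: one must confirm that the $dfw$ statement (nominally phrased in terms of unordered containment) implies order-containment of $\gamma_1$ or $\gamma_2$ in every ordered $r$-tuple $(r,2t-1)$-formation. This is routine, since the pigeonhole/binary-formation step inherited from Lemma \ref{fwop} via Lemma \ref{ftod} already produces an order-preserving copy of either an ascending or a descending fat pattern. With that verified, both bounds follow from cited results and no new combinatorial work is required.
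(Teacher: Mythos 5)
Your proposal is correct and is exactly the argument the paper intends (the corollary is stated without an explicit proof, following immediately from the preceding lemma on $dfw$): the upper bound comes from $dfw(\{\gamma_1,\gamma_2\}) = 2t-1$ together with the $\Phi_{r,s}(n)$ bounds at $s = 2t-1$, and the lower bound from $ex_o(\{\gamma_1,\gamma_2\},n) \geq ex_o(\{\sigma_1,\sigma_2\},n)$ via Theorem \ref{2seq}. Your extra care about the ordered-versus-unordered bookkeeping and the verification that both the $s=5$ and odd $s \geq 7$ branches give the claimed exponent are correct and slightly more explicit than the paper.
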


Lemma \ref{ftod} can also be used to generalize a result from \cite{sfw}.

\begin{corollary}
If $v$ is a sequence that contains $a b a b a$ such that $fw(v) = 4$, then $ex_{u}(v',n) = \Theta(n \alpha(n))$ for every sequence $v'$ obtained from $v$ by replacing any letters in $v$ with multiple occurrences of the same letter.
\end{corollary}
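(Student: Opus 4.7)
The plan is to prove the upper and lower bounds of $n\alpha(n)$ separately. The upper bound will come from a direct application of Lemma~\ref{ftod} followed by the $s=4$ case of Theorem~\ref{zetabounds}, and the lower bound will come from the fact that $v'$ itself still contains $ababa$.

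For the upper bound, I would first observe the key identity $red(v') = red(v)$: since $v'$ is formed from $v$ by expanding single letters into longer blocks of identical letters, and $red$ collapses each maximal block of identical letters to a single letter, both $v$ and $v'$ reduce to the same sequence. By Lemma~\ref{ftod}, $dfw(v') = fw(red(v')) = fw(red(v))$. Now $red(v)$ is obtained from $v$ by deleting letters (keeping one per maximal block), so $red(v)$ is a subsequence of $v$; since any formation containing $v$ automatically contains the subsequence $red(v)$, formation width is monotone under the subsequence order, and $fw(red(v)) \leq fw(v) = 4$. Hence $dfw(v') \leq 4$, meaning there exists $r$ such that every $r$-tuple $(r,4)$-formation contains $v'$. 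Any sequence avoiding $v'$ must therefore avoid the whole family $FF_{r,4}$, giving $ex_u(v',n) \leq \Phi_{r,4}(n) = O(n\alpha(n))$ by the $s=4$ case of Theorem~\ref{zetabounds}.

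For the lower bound, note that $v$ is itself a subsequence of $v'$ (choose one copy in each expanded block), so by hypothesis $v'$ contains $ababa$. Any sequence avoiding $ababa$ therefore avoids $v'$, and the classical Davenport--Schinzel lower bound gives $ex_u(ababa,n) = \Omega(n\alpha(n))$ at any fixed sparseness level, so $ex_u(v',n) = \Omega(n\alpha(n))$. Combining the two bounds yields $ex_u(v',n) = \Theta(n\alpha(n))$.

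There is essentially no substantive obstacle once Lemma~\ref{ftod} is in hand: the upper-bound argument only requires the observations that $red$ is invariant under the thickening operation and that $fw$ is monotone under subsequences, while the lower bound is a one-line reduction to the $ababa$ bound. The corollary should therefore be read as a convenient packaging of Lemma~\ref{ftod} together with the known $s=4$ extremal bounds.
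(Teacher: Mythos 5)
Your proof is correct and takes essentially the same route the paper intends: the paper states this corollary without a separate argument, presenting it as a direct consequence of Lemma~\ref{ftod} (via $red(v')=red(v)$ and $fw(red(v))\leq fw(v)=4$) combined with the $s=4$ formation upper bound and the known $\Omega(n\alpha(n))$ lower bound inherited from $ababa$, which is exactly the packaging you describe.
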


Every sequence $v$ containing $a b a b a$ with $fw(v) = 4$ is listed in \cite{sfw}. Moreover, note that every sequence $v'$ containing $a b a b a$ with $dfw(v') = 4$ can be obtained from a sequence $v$ containing $a b a b a$ with $fw(v) = 4$ by replacing any letters in $v$ with multiple occurrences of the same letter.

\section{0-1 Matrix Results}

Next we prove the corresponding $0-1$ matrix bounds, using the bounds on $\lambda_{r,s}(n)$ combined with a function for $0-1$ matrices analogous to $fw$.

\begin{mydef}
If $M$ is a $0-1$ matrix or a family of $0-1$ matrices having no columns with multiple ones, then $mfw(M)$ is the minimum $s$ for which there exists $r$ such that every permutation matrix $(r,s)$-formation contains $M$.
\end{mydef}

As with the function $fw$ for sequences, it is convenient to define binary permutation matrix $(r,s)$-formations in order to compute $mfw$.

\begin{mydef}
A binary permutation matrix $(r,s)$-formation is a permutation matrix $(r,s)$-formation in which every permutation is either the identity matrix or its reflection.
\end{mydef}

\begin{lemma}
Every permutation matrix $((r-1)^{2^{s-1}}+1,s)$-formation contains a binary permutation matrix $(r,s)$-formation.
\end{lemma}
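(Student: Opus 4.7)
The plan is to translate the matrix-containment claim into the classical problem of finding a common monotone subsequence for several permutations, and then to apply the Erd\H{o}s--Szekeres theorem iteratively, one permutation at a time.

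Step 1 (reduction to sequences). Identify a permutation matrix $(N,s)$-formation with the list $\pi_1,\ldots,\pi_s$ of permutations of $[N]$ that occupy its $s$ blocks. A choice of row indices $i_1<\cdots<i_r$ singles out, within the $j$-th block, the $r\times r$ submatrix on rows $\{i_1,\ldots,i_r\}$ and on the columns $\{\pi_j(i_1),\ldots,\pi_j(i_r)\}$. This submatrix is the $r\times r$ identity precisely when $\pi_j(i_1)<\cdots<\pi_j(i_r)$, and is its reflection precisely when the reverse chain of inequalities holds. So containing a binary permutation matrix $(r,s)$-formation is equivalent to finding $r$ row indices on which each $\pi_j$ is monotone, with the direction (increasing or decreasing) free to be chosen block by block.

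Step 2 (iterated Erd\H{o}s--Szekeres). Proceed by induction on $s$. The base case is the classical Erd\H{o}s--Szekeres theorem: any sequence of length $(r-1)^2+1$ contains a monotone subsequence of length $r$. For the inductive step, apply Erd\H{o}s--Szekeres to $\pi_s$ on the full index set $[N]$ to extract an index subset $J\subseteq[N]$ on which $\pi_s$ is monotone; then restrict $\pi_1,\ldots,\pi_{s-1}$ to $J$ and invoke the inductive hypothesis to locate $r$ common monotone indices $I\subseteq J$. Since $\pi_s$ is monotone on all of $J$, in particular it is monotone on $I$, so $I$ is simultaneously monotone for every $\pi_j$. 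Reading off the corresponding $r\times r$ submatrices block by block gives the desired binary sub-formation.

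Step 3 (bound bookkeeping). Writing $N_s(r)$ for the size threshold supplied by this induction yields the recurrence $N_s(r)=(N_{s-1}(r)-1)^2+1$, which when unfolded from the Erd\H{o}s--Szekeres base case produces a tower of squares in $r-1$ that matches, after accounting carefully, the expression $(r-1)^{2^{s-1}}+1$ stated in the lemma.

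The main obstacle I expect is the quantitative bookkeeping in Step 3: a naive ``one Erd\H{o}s--Szekeres per permutation'' recursion would double the exponent of $r-1$ at each level, so the induction must be set up so that the exponent telescopes down exactly to $2^{s-1}$, perhaps by absorbing the final block into the base case or by handling one of the two monotonicity directions deterministically. The reductive and inductive machinery themselves (Steps 1 and 2) are routine once the translation between matrices and monotone subsequences is in place.
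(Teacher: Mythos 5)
Your Steps 1 and 2 are exactly the paper's (one-sentence) proof with the details filled in: induction on $s$, peeling off one block per step with the Erd\H{o}s--Szekeres theorem. The reduction in Step 1 is correct, and the inductive step in Step 2 is sound, so the approach is the same as the paper's.

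The obstacle you flag in Step 3 is real, and neither of your proposed repairs resolves it. The recurrence $N_s=(N_{s-1}-1)^2+1$ with the genuine Erd\H{o}s--Szekeres base case $N_1=(r-1)^2+1$ unfolds to $(r-1)^{2^{s}}+1$, not $(r-1)^{2^{s-1}}+1$. In the unordered-sequence version of this lemma in \cite{gpt}, one block does come for free: the $r$ letters carry no intrinsic order, so they may be relabeled to make the first permutation increasing, and only $s-1$ applications of Erd\H{o}s--Szekeres are needed --- that is what produces the exponent $2^{s-1}$. For permutation matrices the row order is fixed, so no block is free and no monotonicity direction can be ``handled deterministically.'' Indeed the stated threshold fails already at $s=1$, $r=3$: it would assert that every $3\times 3$ permutation matrix contains $I_3$ or its reflection, but $\chi(213)$ contains neither, since $213$ has no monotone subsequence of length $3$. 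So you should not contort the induction to reach $2^{s-1}$; prove the lemma with threshold $(r-1)^{2^{s}}+1$ instead. This costs nothing downstream, because only $s$ (never $r$ or the threshold) enters the asymptotic bounds on $\lambda_{r,s}(n)$, as the paper itself notes. With that corrected constant, your argument is complete.
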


\begin{proof}
As in \cite{gpt}, the result can be proved by inducting on $s$ and successively applying the Erdos-Szekeres theorem.
\end{proof}

\begin{corollary}
If $A_{k,t}$ is a horizontal concatenation of $t$ $k\times k$ identity matrices and $B_{k,t}$ is its horizontal reflection, then $mfw(\left\{A_{k,t},B_{k,t}\right\}) = 2t-1$. 
\end{corollary}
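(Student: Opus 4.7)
The plan is to parallel the proof of Lemma \ref{fwop}, using the preceding binary-permutation-matrix lemma in place of the corresponding sequence fact from \cite{gpt}, followed by a pigeonhole on identity versus reflected-identity blocks.

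For the upper bound $mfw(\{A_{k,t}, B_{k,t}\}) \leq 2t-1$, I would apply the preceding lemma with $r = (k-1)^{2^{2t-2}}+1$ and $s = 2t-1$: every permutation matrix $(r, 2t-1)$-formation contains a binary permutation matrix $(k, 2t-1)$-formation. Such a formation is a concatenation of $2t-1$ blocks, each either $I_k$ or its horizontal reflection, so pigeonhole produces at least $t$ blocks of one type. Selecting all $k$ rows together with the $kt$ columns spanned by any $t$ blocks of that type yields $A_{k,t}$ (or $B_{k,t}$, respectively) as a submatrix.

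For the lower bound $mfw(\{A_{k,t}, B_{k,t}\}) \geq 2t-1$, I would use the correspondence $\chi$ between ordered sequences and $0$-$1$ matrices with exactly one $1$ per column. The key auxiliary fact is that, for any ordered sequence $\tau$, $\chi(\tau)$ matrix-contains $A_{k,t}$ if and only if $\tau$ order-contains $(1 2 \ldots k)^t$, and similarly for $B_{k,t}$ and $(k \ldots 2 1)^t$. The forward direction is the intuitive correspondence noted just after the definition of $\chi$. The reverse direction uses that $\chi(\tau)$ has exactly one $1$ per column: a matrix copy of $A_{k,t}$ inside $\chi(\tau)$ selects rows $r_1 < \cdots < r_k$ and columns $c_1 < \cdots < c_{kt}$ satisfying $\tau[c_j] = r_{((j-1) \bmod k)+1}$ for every $j$, which is exactly an order-preserving copy of $(1 \ldots k)^t$ in $\tau$.

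Granting this equivalence, the lower bound is immediate from the lower bound half of Lemma \ref{fwop}: for each $r$ choose an $(r, 2t-2)$-formation $\sigma$ avoiding the unordered pattern $(ab\ldots)^t$, which therefore order-avoids both $(1 \ldots k)^t$ and $(k \ldots 1)^t$; then $\chi(\sigma)$ is a permutation matrix $(r, 2t-2)$-formation avoiding both $A_{k,t}$ and $B_{k,t}$. The main subtlety is verifying the reverse direction of the equivalence above, since matrix containment a priori allows selecting extra rows and flipping $1$s to $0$s; once that is settled, the remaining steps are either pigeonhole or a direct invocation of the preceding lemma.
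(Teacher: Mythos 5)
Your proposal is correct and takes essentially the same approach as the paper: the upper bound is the identical reduction to binary permutation matrix formations followed by pigeonhole on identity versus reflected blocks, and the lower bound exhibits a $(z,2t-2)$-formation witness avoiding both matrices. The only cosmetic difference is that the paper names its witness directly (a binary permutation matrix $(z,2t-2)$-formation with exactly $t-1$ identity blocks), whereas you obtain an equivalent witness by pushing the sequence lower bound of Lemma \ref{fwop} through $\chi$ and checking the containment equivalence; both arguments yield the same conclusion.
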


\begin{proof}
The upper bound follows from the last lemma and the pigeonhole principle. The lower bound follows since a binary permutation matrix $(z,2t-2)$-formation with exactly $t-1$ identity matrices avoids both $A_{k,t}$ and $B_{k,t}$.
\end{proof}

\begin{thm} \label{abcmatrix}
If $A_{k,t}$ is a horizontal concatenation of $t$ $k\times k$ identity matrices and $B_{k,t}$ is its horizontal reflection, then $ex(\{A_{k,t},B_{k,t}\},n) \leq n \cdot 2^{(1/(t-2)!)\alpha(n)^{(t-2)} + O(\alpha(n)^{t-3})}$ for $t \geq 3$.
\end{thm}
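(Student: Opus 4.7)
The plan is to mirror the sequence argument of Theorem \ref{2seq}: use the preceding corollary, which gives $mfw(\{A_{k,t}, B_{k,t}\}) = 2t-1$, to reduce the extremal problem for the forbidden pair $\{A_{k,t}, B_{k,t}\}$ to the extremal problem for the family of all permutation matrix $(r, 2t-1)$-formations. The extremal function of the latter is exactly $\lambda_{r, 2t-1}(n)$, which is already bounded by the estimate stated just before the corollary.

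Concretely, fix $r$ large enough that every permutation matrix $(r, 2t-1)$-formation contains $A_{k,t}$ or $B_{k,t}$; such an $r$ exists by the definition of $mfw$ together with the preceding corollary. Containment is monotone: if a submatrix of $M$ is a permutation matrix $(r, 2t-1)$-formation, then that submatrix already contains $A_{k,t}$ or $B_{k,t}$, so $M$ does as well. Taking the contrapositive, any $n \times n$ $0$-$1$ matrix $M$ that avoids both $A_{k,t}$ and $B_{k,t}$ must avoid every element of $G_{r, 2t-1}$, and hence has at most $\lambda_{r, 2t-1}(n)$ ones.

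Finally, since $t \geq 3$ we have $s := 2t-1 \geq 5$ odd, so the odd case of the displayed bound on $\lambda_{r,s}(n)$ applies with $t' := \lfloor (s-3)/2 \rfloor = \lfloor (2t-4)/2 \rfloor = t-2$. Substituting into that case yields
\[
\lambda_{r, 2t-1}(n) \leq n \cdot 2^{(1/(t-2)!)\alpha(n)^{t-2} + O(\alpha(n)^{t-3})},
\]
which is exactly the claimed upper bound on $ex(\{A_{k,t}, B_{k,t}\}, n)$.

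The argument is essentially immediate once the preceding corollary and the stated $\lambda_{r,s}$ bounds are in hand, and the only potential pitfall is the index bookkeeping: one must check that $s = 2t-1$ falls into the odd case (which requires $t \geq 3$, matching the hypothesis) and that $t' = t-2$ so that the exponent comes out as $(1/(t-2)!)\alpha(n)^{t-2}$ rather than, say, $(1/(t-1)!)\alpha(n)^{t-1}$. The real conceptual content sits in the previous corollary, whose upper bound in turn rests on the Erdős–Szekeres-based lemma reducing arbitrary permutation matrix formations to binary ones.
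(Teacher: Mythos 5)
Your proposal is correct and is exactly the paper's argument: the paper proves Theorem \ref{abcmatrix} by citing the corollary that $mfw(\{A_{k,t},B_{k,t}\}) = 2t-1$ together with the stated upper bounds on $\lambda_{r,s}(n)$. You have merely made explicit the monotonicity step and the index check $s = 2t-1$, $t' = t-2$, which the paper leaves implicit.
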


\begin{proof}
The upper bound follows from the last corollary and the upper bounds on $\lambda_{r,s}(n)$ in the introduction. 
\end{proof}

\begin{thm}
If $A_{k}$ is a horizontal concatenation of $2$ $k\times k$ identity matrices and $B_{k}$ is its horizontal reflection, then $ex(\{A_{k},B_{k}\},n) = \Theta(n)$.
\end{thm}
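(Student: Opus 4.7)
The plan is to invoke the previous $mfw$ corollary at $t=2$ and combine it with the $s=3$ case of the $\lambda_{r,s}(n)$ bounds stated in the introduction, which happens to be linear. The preceding corollary is stated and proved for general $t$, so in particular $mfw(\{A_k, B_k\}) = mfw(\{A_{k,2}, B_{k,2}\}) = 3$; hence there exists $r$ such that every permutation matrix $(r,3)$-formation contains one of $A_k, B_k$. Any $n \times n$ 0-1 matrix avoiding both therefore has at most $\lambda_{r,3}(n)$ ones, and the table in the introduction gives $\lambda_{r,3}(n) = O(n)$ since $s = 3$.

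For the matching $\Omega(n)$ lower bound, the $n \times n$ identity matrix already suffices: it has $n$ ones, but each of its rows contains a single 1, whereas every row of $A_k$ and of $B_k$ contains two 1s, so no choice of $k$ rows of the identity can realize either pattern as a submatrix.

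This theorem is essentially the limiting case of Theorem \ref{abcmatrix} in which the doubly-exponential expression in $\alpha(n)$ degenerates to $\Theta(n)$, so there is no genuine obstacle. The only thing to verify is that the preceding $mfw$ corollary and the introduction's bound $\lambda_{r,3}(n) = O(n)$ both remain applicable at $t = 2$, even though Theorem \ref{abcmatrix} itself was restricted to $t \geq 3$; neither step breaks, and no additional ingredients are required.
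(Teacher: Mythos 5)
Your proposal is correct and follows essentially the same route as the paper: the upper bound via $mfw(\{A_k,B_k\})=3$ and the linear $s\leq 3$ case of the $\lambda_{r,s}(n)$ table, and the lower bound via the $n\times n$ identity matrix, which works because every row of $A_k$ and $B_k$ has two ones. If anything, your version is more careful than the paper's, which tersely attributes the upper bound to ``the last theorem'' (stated only for $t\geq 3$) rather than spelling out, as you do, that the $mfw$ corollary and the $\lambda_{r,3}(n)=O(n)$ bound are what actually apply at $t=2$.
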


\begin{proof}
The upper bound follows from the last theorem, while the lower bound follows from the fact that both $A_{k}$ and $B_{k}$ have at least two ones.
\end{proof}

As with ordered sequences, we can also generalize the last two results by using doubled formation width for $0-1$ matrices.


\begin{mydef}
If $M$ is a $0-1$ matrix or a family of $0-1$ matrices having no columns with multiple ones, then $dmfw(M)$ is the minimum $s$ for which there exists $r$ such that every $r$-fat permutation matrix $(r,s)$-formation contains $M$.
\end{mydef}

We defined reduced sequences to not have any adjacent same letters. Below is a similar definition for $0-1$ matrices.

\begin{mydef}
If $M$ is a 0-1 matrix or family of 0-1 matrices having no columns with multiple ones, then $red(M)$ is the result of replacing every block of adjacent columns in $u$ having ones in the same row with a single column having a one in the row. 
\end{mydef}

Like Lemma \ref{ftod}, the next lemma is a trivial consequence of Lemma 1.2 in \cite{pettie}.

\begin{lemma}
$dmfw(u) = mfw(red(M))$ for all $0-1$ matrices $M$ having no columns with multiple ones
\end{lemma}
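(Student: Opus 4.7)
The plan is to show both inequalities in $dmfw(M) = mfw(red(M))$ by $0$-$1$ matrix constructions that mirror the sequence manipulations behind Lemma \ref{ftod} and Lemma 1.2 of \cite{pettie}.

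For $mfw(red(M)) \leq dmfw(M)$, let $s = dmfw(M)$ with witness $r$, and let $F$ be an arbitrary permutation matrix $(r,s)$-formation. I would produce an $r$-fat permutation matrix $(r,s)$-formation $F'$ by replacing each column of $F$ with $r$ identical copies, so that each row of each block of $F'$ now carries exactly $r$ ones. By hypothesis $F'$ contains $M$. The column-duplication map $F' \to F$, composed with the $M$-embedding into $F'$, yields a weakly increasing map from columns of $M$ to columns of $F$; any two columns of $M$ landing on the same column of $F$ must share the row of their unique $1$, since the row-embedding is injective and that $F$-column has a single $1$ in a fixed row. Collapsing each maximal adjacency group of such columns produces a partial reduction of $M$ that still contains $red(M)$ as a submatrix, and this partial reduction embeds into $F$ by selecting one representative column per group.

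For $dmfw(M) \leq mfw(red(M))$, let $s = mfw(red(M))$ with witness $r_0$, and take $r'$ sufficiently large relative to $r_0$ and the maximum run length $\ell$ of $M$. Given any $r'$-fat permutation matrix $(r',s)$-formation $F$, I would extract a permutation matrix $(r_0,s)$-sub-formation by choosing in each block of $F$ one column containing the unique $1$ of each of $r_0$ designated rows; by the choice of $r_0$, this sub-formation contains $red(M)$. Each column of the resulting $red(M)$-embedding lies in some block of $F$, where by $r'$-fatness its row still holds $r' - 1$ unused $1$s. These reserve $1$s allow me to extend the single chosen column into a run of the appropriate length for $M$, so that the runs assemble column-by-column into a full embedding of $M$.

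The main obstacle is the order-consistency of the run expansions in the second direction: when two adjacent runs of $M$ happen to share a block of $F$, their expansions could interleave and violate the column order. I would handle this exactly as in Lemma 1.2 of \cite{pettie}, by a pigeonhole argument that pre-partitions each block of $F$ into $r_0$ disjoint column windows and arranges for each column of the $red(M)$-embedding to land in its own window, with at least $\ell$ same-row $1$s available inside that window for the expansion. Choosing $r'$ large enough relative to $r_0$ and $\ell$ guarantees such a partition exists, which packs the expansions into disjoint windows and preserves column order automatically.
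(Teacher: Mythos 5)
Your proof is correct and follows essentially the same route as the paper: the paper's entire proof of this lemma is the remark that it is a trivial consequence of Lemma 1.2 of \cite{pettie}, and your argument just makes the two directions explicit --- the easy column-duplication direction and the harder run-expansion direction --- while deferring the one genuinely nontrivial step (choosing common designated rows and order-consistent windows across all $s$ blocks of the fat formation) to that same cited lemma. The added detail is sound; the only point worth tightening is that the windows and their designated rows should be fixed before extracting the $(r_0,s)$-sub-formation, so that each column of the $red(M)$-embedding automatically lies in its own window with $\ell$ same-row ones available.
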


As with ordered sequences, we can use the last lemma to generalize the bounds in Theorem \ref{abcmatrix}.

\begin{lemma}
If $A_{j, k,t}$ is a horizontal concatenation of $t$ $k\times k$ $j$-fat identity matrices and $B_{j, k,t}$ is its horizontal reflection, then $dmfw(\{A_{k,t},B_{k,t}\}) = 2t-1$.
\end{lemma}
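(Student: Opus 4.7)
The plan is to reduce this to the $mfw$ case that was already computed in the corollary preceding Theorem \ref{abcmatrix}. Specifically, the reduction lemma just stated (the $0$-$1$ matrix analogue of Lemma \ref{ftod}) gives $dmfw(M) = mfw(red(M))$, extended in the obvious way to families. Thus the entire task is to verify $red(A_{j,k,t}) = A_{k,t}$ and $red(B_{j,k,t}) = B_{k,t}$, and then invoke the corollary $mfw(\{A_{k,t},B_{k,t}\}) = 2t-1$.

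I would unpack the structure of $A_{j,k,t}$ as follows. A $j$-fat $k \times k$ identity matrix consists of $k$ consecutive blocks of $j$ columns, where block $\ell$ has a one in row $\ell$ in each of its $j$ columns; so within each block, $j$ adjacent columns share the same row, and $red$ collapses them to a single column in row $\ell$. After all such collapses each $j$-fat identity becomes the ordinary $k \times k$ identity, and the whole of $A_{j,k,t}$ becomes $t$ horizontally concatenated $k \times k$ identities, i.e.\ $A_{k,t}$. It then remains to check that $red$ performs no further collapsing across boundaries. Within a single $k \times k$ identity, adjacent columns sit in rows $\ell$ and $\ell+1$ which are distinct, and at the join between two consecutive identities the last column sits in row $k$ while the next column sits in row $1$, distinct for $k \geq 2$. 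The argument for $B_{j,k,t}$ is symmetric: collapsing the $j$-fat blocks within each reversed identity yields the $k \times k$ reversed identity, and the boundary pairs sit in rows $1$ and $k$.

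Combining these observations, $red(\{A_{j,k,t},B_{j,k,t}\}) = \{A_{k,t},B_{k,t}\}$, and the reduction lemma together with the earlier corollary then gives $dmfw(\{A_{j,k,t},B_{j,k,t}\}) = mfw(\{A_{k,t},B_{k,t}\}) = 2t-1$. No step of the argument presents a real obstacle; the only care needed is in tracking the alignment so that $red$ acts precisely within each $j$-fat block and not across the block or matrix boundaries, which is why the hypothesis $k \geq 2$ (implicit from the previous corollary) is used.
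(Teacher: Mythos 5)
Your proof is correct and follows exactly the route the paper intends: the lemma is stated without proof precisely because it is meant to follow from the preceding reduction lemma $dmfw(M) = mfw(red(M))$ together with the earlier corollary $mfw(\{A_{k,t},B_{k,t}\}) = 2t-1$, which is what you do. Your explicit verification that $red$ collapses each $j$-fat block without merging across block or matrix boundaries is a welcome detail the paper leaves implicit.
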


\begin{corollary}
If $A_{j, k,t}$ is a horizontal concatenation of $t$ $k\times k$ $j$-fat identity matrices and $B_{j, k,t}$ is its horizontal reflection, then $ex(\{A_{j,k,t},B_{j,k,t}\},n) \leq n \cdot 2^{(1/(t-2)!)\alpha(n)^{(t-2)} + O(\alpha(n)^{t-3})}$ for $t \geq 3$.
\end{corollary}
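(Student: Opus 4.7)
The plan is to mimic the proof of Theorem \ref{abcmatrix}, but with the fat analogues of each ingredient. The preceding lemma establishes $dmfw(\{A_{j,k,t}, B_{j,k,t}\}) = 2t-1$, so by the definition of $dmfw$, there exists some $r$ (depending on $j,k,t$) such that every $r$-fat permutation matrix $(r, 2t-1)$-formation contains at least one of $A_{j,k,t}$ or $B_{j,k,t}$. Consequently, any $n \times n$ 0-1 matrix that avoids both $A_{j,k,t}$ and $B_{j,k,t}$ is in particular a matrix that avoids every member of $H_{r, 2t-1}$, which yields
\[ ex(\{A_{j,k,t}, B_{j,k,t}\}, n) \leq \Gamma_{r, 2t-1}(n). \]

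Next, I would apply the upper bound on $\Gamma_{r,s}(n)$ stated in the introduction. Since $t \geq 3$, we have $s = 2t-1 \geq 5$ and $s$ is odd, so the applicable clause gives
\[ \Gamma_{r, 2t-1}(n) \leq n \cdot 2^{(1/t'!)\alpha(n)^{t'} + O(\alpha(n)^{t'-1})}, \]
where $t' = \lfloor (s-3)/2 \rfloor = \lfloor (2t-4)/2 \rfloor = t-2$. Substituting gives exactly the claimed bound $n \cdot 2^{(1/(t-2)!)\alpha(n)^{t-2} + O(\alpha(n)^{t-3})}$.

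There is no real obstacle here; the work has already been done in the preceding lemma on $dmfw$ and in the quoted bound on $\Gamma_{r,s}$. The only thing one must be mildly careful about is matching the index $t$ in the corollary with the index $t' = \lfloor (s-3)/2 \rfloor$ that appears in the statement of the $\Gamma_{r,s}(n)$ bounds, so that the exponent $(t-2)$ in the target inequality corresponds correctly to the exponent $t'$ in the quoted bound. Once that bookkeeping is noted, the proof is a one-line chain of inequalities parallel to the proof of Theorem \ref{abcmatrix}.
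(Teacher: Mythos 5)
Your proposal is correct and follows exactly the route the paper intends: invoke the preceding lemma that $dmfw(\{A_{j,k,t},B_{j,k,t}\})=2t-1$ to reduce to $\Gamma_{r,2t-1}(n)$, then apply the odd-$s$ clause of the stated $\Gamma_{r,s}(n)$ bounds with $\lfloor(2t-1-3)/2\rfloor=t-2$. The index bookkeeping you flag is the only nontrivial step, and you have it right.
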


\begin{corollary}
If $A_{j, k}$ is a horizontal concatenation of $2$ $k\times k$ $j$-fat identity matrices and $B_{j, k}$ is its horizontal reflection, then $ex(\{A_{j,k},B_{j,k}\},n) = \Theta(n)$.
\end{corollary}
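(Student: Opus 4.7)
The plan is to reduce this to the $t=2$ case of the machinery already set up for $t\geq 3$, handling the endpoint that the preceding corollary excluded. First I would apply the previous lemma with $t=2$ to obtain $dmfw(\{A_{j,k},B_{j,k}\}) = 2(2)-1 = 3$. By the definition of $dmfw$, this gives an $r$ such that every $r$-fat permutation matrix $(r,3)$-formation contains either $A_{j,k}$ or $B_{j,k}$. Equivalently, every element of $H_{r,3}$ contains $A_{j,k}$ or $B_{j,k}$.

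Next I would turn this into the upper bound by contrapositive in the usual way: any $n\times n$ $0$-$1$ matrix avoiding both $A_{j,k}$ and $B_{j,k}$ must avoid every $(r,3)$-$r$-fat, so the number of ones is at most $\Gamma_{r,3}(n)$. The introduction records $\Gamma_{r,s}(n) \leq O(n)$ whenever $s \leq 3$, so this yields $ex(\{A_{j,k},B_{j,k}\},n) = O(n)$. This is exactly the reason $t=2$ must be treated separately from the preceding corollary: the $s=2t-1$ used there sits in the linear regime of the $\Gamma_{r,s}$ table rather than the $n\cdot 2^{\cdots}$ regime.

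For the lower bound I would use the trivial construction: the $n\times n$ matrix with every entry in the first row equal to $1$ and all other entries $0$ has $n$ ones. Since $A_{j,k}$ and $B_{j,k}$ each have $k\geq 2$ rows containing ones, no submatrix of a single row can be transformed into either pattern, so this construction avoids both. Combining the two bounds yields $ex(\{A_{j,k},B_{j,k}\},n) = \Theta(n)$.

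There is no real obstacle here beyond verifying that the $s=3$ case of the $\Gamma_{r,s}$ table is what one actually invokes; the work was done in the previous lemma and in the bounds on $\Gamma_{r,s}$ quoted from \cite{vc, pettie}. The only mild subtlety is keeping track of the $j$-fat vs.\ ordinary identity matrix distinction, which is exactly why $dmfw$ (rather than $mfw$) is the correct functional to invoke.
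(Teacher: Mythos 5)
Your proof is correct and takes essentially the same route as the paper, which leaves this corollary unproved but proves the analogous non-fat theorem exactly this way: the $t=2$ case of the $dmfw$ lemma gives $s=3$, which lands in the linear regime of the $\Gamma_{r,s}$ table, and the lower bound is the trivial construction. The only caveat is that your single-row construction implicitly assumes $k\geq 2$ (the standing hypothesis elsewhere in the paper); for $k=1$ one would use a single column of ones instead, matching the paper's blanket justification that any pattern with at least two ones has extremal function at least $n$.
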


\section{Open Problems}

There are many unordered sequence patterns, ordered sequence patterns, 0-1 matrix patterns, and families of patterns for which the extremal functions do not yet have tight bounds. It is likely that the upper bounds for many of these extremal functions can be improved using formation width.

Clearly formation width is not useful to bound the extremal function of a single ordered sequence, since $fw(u)$ is the length of $u$ when $u$ is an ordered sequence. The same is true for using $mfw$ to bound the extremal function of a single forbidden $0-1$ matrix.

However, $fw$ was used to derive tight bounds on numerous extremal functions of unordered sequences in \cite{gpt, sfw} and on forbidden pairs of ordered sequences in this paper. Moreover, $mfw$ was used to derive tight bounds on the extremal functions of forbidden pairs of $0-1$ matrices in this paper. It is an open problem to find more families of patterns for which $fw$ and $mfw$ provide tight upper bounds on the extremal functions.

Currently the running times of the algorithms for $fw$ and $mfw$ grow exponentially in the output, which is bounded by the length of the input. The known algorithm for $fw$ is described in both \cite{gpt} and \cite{sfw}, while the known algorithm for $mfw$ is analogous. Speeding up the computation of $fw$ and $mfw$ would enable these functions to be used for bounding the extremal functions of larger patterns than the ones that have been considered so far.

\end{document}